\newtheorem{theorem}{Theorem}
\newtheorem{lemma}{Lemma}
\newtheorem{corollary}{Corollary}
\DeclareMathOperator*{\argmin}{arg\,min}
\DeclareMathOperator{\E}{\mathbb{E}}
\DeclareMathOperator{\diag}{diag}
\newcommand{\RR}{\mathbb{R}}
\newcommand{\qedproof}{\hfill\blacksquare}
\newcommand{\Reg}{\texttt{R}}
\newcommand{\XX}{\mathcal{X}}
\newcommand{\xx}{\mathbf{x}}
\newcommand{\yy}{\mathbf{y}}
\newcommand{\uu}{\mathbf{u}}
\newcommand{\xst}{\mathbf{x}^\star_t}
\newcommand{\xt}{\mathbf{x}_t}
\newcommand{\hxt}{\mathbf{\hat{x}}_t}
\newcommand{\pt}{\mathbf{p}_t}
\newcommand{\tpt}{\tilde{\mathbf{p}}_t}
\renewcommand{\tt}{\bm{\theta}_t}
\newcommand{\mtt}{\left\langle \bm{\theta} \right\rangle_{t-1}}
\newcommand{\oo}{\mathbf{1}}
\def\BibTeX{{\rm B\kern-.05em{\sc i\kern-.025em b}\kern-.08em
    T\kern-.1667em\lower.7ex\hbox{E}\kern-.125emX}}
\begin{document}
\title{Online Convex Optimization\\ with Binary Constraints}

\author{Antoine Lesage-Landry, \IEEEmembership{Member, IEEE}, Joshua A. Taylor, \IEEEmembership{Member, IEEE},\\ and Duncan S. Callaway, \IEEEmembership{Member, IEEE} 
\thanks{This work was funded by the Natural Sciences and Engineering Research Council of Canada, by the Institute for Data Valorization (IVADO), by the National Science Foundation, award 1351900, and by the Advanced Research Projects Agency-Energy, award DE-AR0001061.}
\thanks{A. Lesage-Landry is with the Department of Electrical Engineering, Polytechnique Montréal, Montréal, QC, Canada, H3T 1J4. e-mail: \texttt{antoine.lesage-landry@polymtl.ca}}%
\thanks{J.A. Taylor is with The Edward S. Rogers Sr. Department of Electrical \& Computer Engineering, University of Toronto, Toronto, Ontario, Canada, M5S 3G4. e-mail: \texttt{josh.taylor@utoronto.ca}}%
\thanks{D.S. Callaway is with the Energy \& Resources Group, University of California, Berkeley, CA, USA 94720. e-mail: \texttt{dcal@berkeley.edu}}%
}

\maketitle

\begin{abstract}
We consider online optimization with binary decision variables and convex loss functions. We design a new algorithm, binary online gradient descent (\texttt{bOGD}) and bound its expected dynamic regret. We provide a regret bound that holds for any time horizon and a specialized bound for finite time horizons. First, we present the regret as the sum of the relaxed, continuous round optimum tracking error and the rounding error of our update in which the former asymptomatically decreases with time under certain conditions. Then, we derive a finite-time bound that is sublinear in time and linear in the cumulative variation of the relaxed, continuous round optima. We apply \texttt{bOGD} to demand response with thermostatically controlled loads, in which binary constraints model discrete on/off settings. We also model uncertainty and varying load availability, which depend on temperature deadbands, lockout of cooling units and manual overrides. We test the performance of \texttt{bOGD} in several simulations based on demand response. The simulations corroborate that the use of randomization in \texttt{bOGD} does not significantly degrade performance while making the problem more tractable.
\end{abstract}

\begin{IEEEkeywords}
binary decision, demand response, dynamic regret, online convex optimization, thermostatically controlled loads
\end{IEEEkeywords}

\section{Introduction}
\label{sec:intro}

\IEEEPARstart{I}{n} online convex optimization (OCO), one makes a decision in each time step to minimize a time-varying loss function~\cite{hazan2016introduction,shalev2012online}. The loss function is only observed by the decision maker following the round. These observations are then used to update the decision. The simplicity and computational efficiency of OCO make it well-suited to online applications like demand response in electric power systems~\cite{kim2016online,lesage2018setpoint}.

Demand response with thermostatically controlled loads (TCLs), e.g., residential households~\cite{callaway2009tapping,mathieu2012state} or commercial buildings~\cite{hao2014ancillary}, can provide fast timescale services such as frequency regulation and load following~\cite{callaway2010achieving}. These services can economically improve the operation of power systems~\cite{taylor2016power}. 

In this work, we formulate an algorithm for online convex optimization with binary constraints, which we refer to as the binary online gradient descent (\texttt{bOGD}). The challenge is to design an algorithm: (i) in line with OCO's simple and computationally efficient updates so that binary decisions can be provided in real-time, and (ii) which has theoretically guaranteed performance. The guarantee we are after takes the form of a sublinear regret in the problem's time horizon.
We use \texttt{bOGD} for setpoint tracking with TCLs with discrete on/off settings and lockout constraints, as well as the standard deadband temperature constraint. The lockout constraint ensures that, after being turned off, a 5-minute period must pass before a cooling unit can be turned on again. Lockout is a significant physical limitation of TCLs for fast demand response applications, and is rarely accounted for in the literature. 

Several decision-making and resource allocation problems have binary constraints, e.g., scheduling~\cite{hillier1995introduction} or turning off or on a residential load's cooling unit for demand response~\cite{callaway2009tapping,mathieu2012state}. In this work, we focus on functions which, without the binary constraint, are convex over the set $[0,1]^n$, $n \in \mathbb{N}$. In our work, we show that \texttt{bOGD}, an OCO algorithm with an additional randomization step, provides sequential binary decisions with a provable finite-time performance guarantee~\cite{shahrampour2017distributed}. We show that, for a finite time horizon, \texttt{bOGD} has an expected dynamic regret bound that is sublinear in the number of rounds and linear in the cumulative variation between relaxed, continuous round optima. If the cumulative variation is sublinear, then the average expected regret decreases as the time horizon increases. Therefore, for a finite time horizon, the expected decisions provided by \texttt{bOGD}, on average, approach the optimal ones because the average error approaches zero.

\paragraph*{Related work}
\texttt{bOGD} is based on OCO~\cite{hazan2016introduction,shalev2012online}. In online optimization, the closest work to ours is~\cite{hazan2012online}. In this work, the authors consider binary decision variables for submodular loss functions and provide a static regret analysis. Several extensions of this work have been proposed, e.g.,~\cite{jegelka2011online,cardoso2019differentially}. Similarly to convexity in continuous optimization, in discrete/combinatorial optimization, submodularity usually implies that a problem can be solved efficiently~\cite{bach2019submodular}. Submodular minimization problems can be solved efficiently using either combinatorial algorithms~\cite{iwata2001combinatorial} or convex optimization algorithms via the Lov\'asz extension, a continuous and convex extension of the submodular function~\cite{bach2019submodular,lovasz1983submodular}. In our work, we minimize convex functions over the binary set $\left\{0,1\right\}^n$, where convexity is over the continuous set $\left[0,1\right]^n$. Convexity over the continuous set does not imply submodularity over the binary set, and nor does submodularity imply convexity. Consider, for example, a twice differentiable function $f:\left[0,1\right]^n\mapsto\RR$. Then, $f$ is submodular if all of its mixed second-order derivatives are nonpositive~\cite{bach2019submodular}, whereas $f$ is convex over $\left[0,1\right]^n$ if the Hessian is positive semidefinite. While both conditions can be met simultaneously, neither of them implies the other. The loss function of this work's motivating application, setpoint tracking with TCLs, is convex but not submodular, and thus falls outside the scope of~\cite{hazan2012online}. We also provide a different performance guarantee than~\cite{hazan2012online} by showing that \texttt{bOGD} has a bounded finite-time dynamic regret bound.

Randomized expert and bandit-based algorithms have also been developed for online problems with binary decision variables~\cite{bubeck2011introduction,hazan2012online}. In the general case, these algorithms are computationally inefficient due to the large number of possible experts or decisions, i.e., $2^n$ possible options must be considered, where $n \in \mathbb{N}$ is the number of experts or decisions~\cite{hazan2012online}. Several efficient algorithms for binary decisions have been proposed in online linear optimization~\cite{audibert2014regret} (and the references therein) under restrictive assumptions, e.g., a constant total number of 1-decisions being selected and linear loss functions. 

Binary constraints were considered in the context of time-varying optimization of power systems in~\cite{zhou2019online,bernstein2019real,bernstein2016real}. In~\cite{zhou2019online}, a similar randomization step is used to accommodate loads with discrete settings. The performance of the randomization step is, however, only characterized through an asymptotic bound on the dual variables of the problem. Reference~\cite{bernstein2019real} also models discrete decisions and lockout using the approach of~\cite{bernstein2016real}. In~\cite{bernstein2016real}, the authors extend the error diffusion algorithm. In each round, the decision is given by the projection onto the discrete set of the cumulative difference between the continuous (relaxed) and the discrete decisions. An example of demand response with heaters that includes binary decisions and lockout constraints is provided. The performance analysis of~\cite{bernstein2019real} and~\cite{bernstein2016real} ensures that the cumulative difference between the continuous and discrete decisions is bounded, but it does not characterize the difference between the round optimum and the discrete decision made in each round.

Our specific contributions are:
\begin{itemize}
    \item We handle binary constraints in OCO using a randomization step. To the best of our knowledge, binary decision variables have not previously been integrated into OCO. 
    \item We obtain expected dynamic regret bounds for the resulting algorithm, \texttt{bOGD}. We show that the regret is bounded from above by the sum of the relaxed optimum tracking error, which vanishes in time when standard assumptions are satisfied, and the rounding error. For finite time horizons, we provide a bound that is sublinear in time and linear in the cumulative variation of relaxed round optima.
    \item We use \texttt{bOGD} for demand response of TCLs with binary on/off settings. We also account for the unavailability of loads for demand response caused by temperature deadbands, lockout constraints, and manual override.
\end{itemize}

\section{OCO with binary constraints}
\label{sec:oco_bin}
We consider an OCO setting in which a decision maker sequentially minimizes a loss function with binary decisions. The loss function is specific to the current time instance or round.
We denote the round by $t$ and the time horizon by $\tau$. Let $\hxt \in \left\{0,1\right\}^n$ and $\xx_t \in \left[0,1\right]^n$ be, respectively, the discrete decision and the relaxed, continuous decision variables at round $t$. Let $f_t:\left[0,1\right]^n\mapsto \RR$, $n\in \mathbb{N}$, be the differentiable loss function at round $t$ for $t=1,2,\ldots, \tau$. We assume that $f_t$ is convex and bounded over $\left[0,1\right]^n$. The convexity of $f_t$ and compactness of the relaxed domain imply that $f_t$ is Lipschitz continuous with respect to a norm $\|\cdot\|$~\cite{hiriart2013convex} for all $\xx \in \left[0,1\right]^n$ and $t=1,2,\ldots,\tau$. Let $L_1, L_2 \in (0,+\infty)$ be the Lipschitz continuity moduli with respect to the $\ell_1$- and $\ell_2$-norm, respectively. Thus, $\left\| \nabla f_t \left(\xx \right)\right\|_1 \leq L_1$ and $\left\| \nabla f_t \left(\xx \right)\right\|_2 \leq L_2$ for all $t$ and $\xx \in \left[ 0 , 1 \right]^n$ and all $t$. 

The binary-constrained OCO problem is as follows. In each round $t$, the decision maker must solve:
\begin{equation}
\min_{\hxt \in \left\{0,1 \right\}^n} f_t(\hxt), \label{eq:nonconvex}
\end{equation} 
where $f_t$ is only known after round $t$. The performance of OCO algorithms are evaluated in terms of the regret~\cite{zinkevich2003online,shalev2012online,hazan2016introduction}. 
In this work, we use the dynamic regret~\cite{zinkevich2003online,hall2015online,shahrampour2017distributed}, which represents the cumulative loss difference between the actual decision, $\hxt$, and the round optimum computed in hindsight, $\mathbf{b}_t^\star \in \argmin_{\xx \in \left\{0,1 \right\}^n} f_t \left( \xx \right)$. In this case, the dynamic regret takes the following form:
\[
\Reg_\tau = \sum_{t=1}^\tau f_t(\hxt) - f_t(\mathbf{b}_t^\star),
\]
where $\hxt$ is the decision at $t$. Because we use an additional randomization step to deal with the binary nature of the decision variable, we bound the expectation of the dynamic regret. 
We seek an algorithm with a sublinear bound~\cite{shalev2012online,hazan2016introduction}, which implies that $\E\left[\Reg_\tau\right]/\tau$, the time-averaged regret, diminishes as the time horizon increases, and thus so does the average decision error. Finally, it is common to bound the dynamic regret as a function of the cumulative variation in the relaxed round optima, $V_\tau$~\cite{zinkevich2003online,hall2015online}. This term characterizes how much the sequence of relaxed optima changes through time. Let $V_\tau = \sum_{t=1}^{\tau-1} \left\|\xx_{t+1}^\star - \xx_t^\star \right\|_2$ where $\xx_{t}^\star \in \argmin_{\xx \in \left[0,1 \right]^n} f_t \left( \xx \right)$.

We use the \texttt{bOGD} update to solve~\eqref{eq:nonconvex}, given by
\begin{align}
\xx_{t+1} &= \argmin_{\xx \in \left[0,1 \right]^n} \eta \nabla f_{t}(\xx_{t})^\top \xx + \frac{1}{2}\left\| \xx - \xx_{t} \right\|^2_2 + \eta \lambda \left\|\xx\right\|_1 \label{eq:update_1}\\
\hat{\xx}_{t+1} &= \mathcal{R}\left( \xx_{t+1} \right),\label{eq:update_2}
\end{align}
with $\xx_{1} \in [0,1]^n$ and $\hat{\xx}_{1} = \mathcal{R}\left( \xx_{1} \right)$, and where $\lambda \geq 0$ controls the optional $\ell_1$-regularizer, $\eta >  0$ is the descent step size to be set shortly, and $\mathcal{R}:\left[ 0 , 1 \right]^n \mapsto \left\{0,1 \right\}^n$ is a randomization function. The function $\mathcal{R}$ returns a vector made only of zeros or ones where each entry $i$ is sampled from a Bernoulli random variable with probability $p=\xt(i)$. We note that $\E\left[ \hxt \right] \equiv \E\left[ \mathcal{R}\left(\xt \right) \right] = \xt$. 

\texttt{bOGD} is the first regret-bounded algorithm for OCO with binary constraints. It works as follows. 
In~\eqref{eq:update_1}, the auxiliary, continuous variable $\xx_t$ is updated using dynamic mirror descent with an $\ell_1$-regularizer and the $\ell_2$-norm-based Bregman divergence~\cite{hall2015online}. In~\eqref{eq:update_2}, the randomization function converts the auxiliary variable to the binary decision vector $\hat{\xx}_{t+1}$. The \texttt{bOGD} update combines~\eqref{eq:update_1} and~\eqref{eq:update_2} to first perform a projection gradient descent step on the relaxed, continuous problem at time $t$, and then to obtain a binary decision using randomization. A full description of \texttt{bOGD} is presented in Algorithm~\ref{alg:bogd_alg}.

\begin{algorithm}[!t]
\begin{algorithmic}[1]
\STATEx \textbf{Parameters:} $T$, $a$, $\lambda$, and $\tau$.
\STATEx \textbf{Initialization:} Set $\xx_1 \in \left[0,1 \right]^n$, $\eta = \frac{a}{\sqrt{T}}$.
\medskip

\FOR{$t = 1,2, \ldots, \tau$}
\STATE Implement binary decision $\hxt = \mathcal{R}\left( \xt \right)$.

\STATE Observe the loss function $f_t(\hxt)$.

\STATE Compute the auxiliary variable $\xx_{t+1}$:
\begin{align*}
\xx_{t+1} = \argmin_{\xx \in \left[0,1 \right]^n} \eta \nabla f_{t}(\xx_{t})^\top \xx + \frac{1}{2}\left\| \xx - \xx_{t} \right\|^2_2 + \eta \lambda \left\|\xx\right\|_1.
\end{align*}
\ENDFOR
\end{algorithmic}
\caption{Binary online gradient descent (\texttt{bOGD})}
\label{alg:bogd_alg}
\end{algorithm}

\section{Regret Analysis}
\label{sec:regret_analysis}
We now present two intermediate results that will be used to derive \texttt{bOGD}'s dynamic regret bounds. We first re-express the expected regret of \texttt{bOGD} in terms of a tracking and a rounding component.
\begin{lemma} The expected regret of \texttt{bOGD} is bounded from above by
\begin{equation}
\E\left[\Reg_\tau \right] \leq \Reg_\tau\left(\text{relaxed}\right) + \sum_{t=1}^\tau \E\left[\left|f_t(\hxt) - f_t(\xt) \right| \right], \label{eq:sum_of}
\end{equation}
where $\Reg_\tau\left(\text{relaxed}\right)= \sum_{t=1}^\tau f_t(\xt) - f_t(\mathbf{x}_t^\star).$
\label{lem:regret_is_sum}
\end{lemma}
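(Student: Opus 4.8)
The plan is to establish the bound round by round: for each $t$ I would add and subtract the relaxed decision $\xt$ and the relaxed round optimum $\mathbf{x}_t^\star$ inside each summand of the dynamic regret, bound the resulting pieces, and only then take expectations and sum. Concretely, I would write
\[
f_t(\hxt) - f_t(\mathbf{b}_t^\star) = \bigl(f_t(\hxt) - f_t(\xt)\bigr) + \bigl(f_t(\xt) - f_t(\mathbf{x}_t^\star)\bigr) + \bigl(f_t(\mathbf{x}_t^\star) - f_t(\mathbf{b}_t^\star)\bigr),
\]
so that the middle term is exactly the per-round relaxed regret and requires nothing further.

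For the last term I would use the set inclusion $\left\{0,1\right\}^n \subseteq \left[0,1\right]^n$: since $\mathbf{x}_t^\star$ minimizes $f_t$ over the larger set $\left[0,1\right]^n$ while $\mathbf{b}_t^\star$ minimizes it only over $\left\{0,1\right\}^n \ni \mathbf{b}_t^\star$, monotonicity of the minimum under set inclusion gives $f_t(\mathbf{x}_t^\star) \le f_t(\mathbf{b}_t^\star)$, hence $f_t(\mathbf{x}_t^\star) - f_t(\mathbf{b}_t^\star) \le 0$ and this term can simply be discarded. For the first term I would use $f_t(\hxt) - f_t(\xt) \le \left|f_t(\hxt) - f_t(\xt)\right|$. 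Combining, for each $t$,
\[
f_t(\hxt) - f_t(\mathbf{b}_t^\star) \le \left|f_t(\hxt) - f_t(\xt)\right| + \bigl(f_t(\xt) - f_t(\mathbf{x}_t^\star)\bigr).
\]
Now I take expectations. The quantities $\mathbf{b}_t^\star$ and $\mathbf{x}_t^\star$ depend only on $f_t$, and the auxiliary iterate $\xt$ is produced deterministically by the update~\eqref{eq:update_1}; only $\hxt = \mathcal{R}(\xt)$ carries the randomization, so by linearity of expectation the expectation passes through everything except the rounding term. Summing the resulting inequality over $t = 1,2,\ldots,\tau$ and identifying $\Reg_\tau(\text{relaxed}) = \sum_{t=1}^\tau f_t(\xt) - f_t(\mathbf{x}_t^\star)$ yields~\eqref{eq:sum_of}.

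There is no substantial obstacle here — the argument is essentially a triangle-inequality decomposition — so the only points demanding care are the direction of the inequality $f_t(\mathbf{x}_t^\star) \le f_t(\mathbf{b}_t^\star)$ (it must point this way for the discarded term to be nonpositive) and the bookkeeping of which quantities are random (only $\hxt$ is, so the expectation acts trivially on the relaxed regret). If one wished to allow $\xt$ itself to be random, for instance under bandit feedback, the identical chain of steps would go through after first conditioning on the history up to round $t$.
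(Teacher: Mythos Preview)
Your proposal is correct and follows essentially the same approach as the paper's proof: both add and subtract $f_t(\xt)$, use $f_t(\mathbf{x}_t^\star) \le f_t(\mathbf{b}_t^\star)$ from the set inclusion $\{0,1\}^n \subseteq [0,1]^n$ to drop the comparator term, and bound the rounding difference by its absolute value. The only cosmetic difference is that the paper first replaces $\mathbf{b}_t^\star$ by $\mathbf{x}_t^\star$ and then inserts $f_t(\xt)$, whereas you write the three-term telescoping decomposition in one line; the logic is identical.
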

The proof of this lemma is in Appendix~\ref{app:proof_of_sum}. Lemma~\ref{lem:regret_is_sum} shows that the expected \texttt{bOGD} regret is upper bounded by the sum of the relaxed, continuous problem regret (first right-hand side term of~\eqref{eq:sum_of}) and the cumulative rounding error due to the randomization step (second right-hand side term of~\eqref{eq:sum_of}). We now bound the expected rounding error, i.e., the expected difference between the loss associated with the randomized binary and relaxed decisions.
\begin{lemma}
Let the binary decision variable $\hxt$ be computed using \texttt{bOGD}. Then
\[
\E\left[ \left|f_t\left( \hxt \right) - f_t\left( \xt \right) \right| \right] \leq \frac{L_2 \sqrt{n}}{2}.
\]
\label{lem:err_rand_2}
\end{lemma}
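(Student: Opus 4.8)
The plan is to reduce the bound to a variance computation for the Bernoulli rounding. First I would use Lipschitz continuity of the loss: since $f_t$ is $L_2$-Lipschitz with respect to $\|\cdot\|_2$ on $[0,1]^n$, and both $\hxt \in \{0,1\}^n \subseteq [0,1]^n$ and $\xt \in [0,1]^n$, we have the pointwise bound $|f_t(\hxt) - f_t(\xt)| \leq L_2 \|\hxt - \xt\|_2$. Taking expectations over the randomization, $\E\left[|f_t(\hxt) - f_t(\xt)|\right] \leq L_2\, \E\left[\|\hxt - \xt\|_2\right]$. Here $\xt$ is deterministic given $\xx_1$, since the update~\eqref{eq:update_1} only feeds on $\xx_t$ and $\nabla f_t(\xx_t)$, not on the sampled $\hxt$; so the only randomness left is the Bernoulli draw of $\hxt$.

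Next I would pass to the second moment via Jensen's inequality applied to the concave square root: $\E\left[\|\hxt - \xt\|_2\right] \leq \left(\E\left[\|\hxt - \xt\|_2^2\right]\right)^{1/2}$. Expanding the squared Euclidean norm coordinatewise gives $\E\left[\|\hxt - \xt\|_2^2\right] = \sum_{i=1}^n \E\left[(\hxt(i) - \xt(i))^2\right]$. Since $\hxt(i)$ is a Bernoulli random variable with mean $\xt(i)$, each summand is exactly the variance $\xt(i)\bigl(1 - \xt(i)\bigr)$, which is at most $\tfrac14$ because $p(1-p) \leq \tfrac14$ for $p \in [0,1]$. Hence $\E\left[\|\hxt - \xt\|_2^2\right] \leq n/4$, so $\E\left[\|\hxt - \xt\|_2\right] \leq \sqrt{n}/2$, and combining with the Lipschitz step yields $\E\left[|f_t(\hxt) - f_t(\xt)|\right] \leq L_2 \sqrt{n}/2$, as claimed.

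There is no serious obstacle here; the only point that needs care is bookkeeping the source of randomness — ensuring that $\xt$ is fixed (or conditioning on the history) before $\hxt$ is drawn, so that $\E[(\hxt(i) - \xt(i))^2]$ genuinely equals the Bernoulli variance $\xt(i)(1 - \xt(i))$ rather than something larger. Everything else is the chain Lipschitz $\to$ Jensen $\to$ $p(1-p) \leq \tfrac14$.
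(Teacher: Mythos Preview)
Your proof is correct and follows essentially the same route as the paper: Lipschitz continuity in $\|\cdot\|_2$, Jensen's inequality for the square root, identification of $\E[(\hxt(i)-\xt(i))^2]$ with the Bernoulli variance $\xt(i)(1-\xt(i))$, and the bound $p(1-p)\le 1/4$. Your extra remark that $\xt$ is deterministic (so the expectation is only over the Bernoulli draw) is a welcome clarification that the paper leaves implicit.
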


The proof of Lemma~\ref{lem:err_rand_2} is provided in Appendix~\ref{app:proof_lemma_rand_2}. Next, we present our regret bounds for \texttt{bOGD}. We provide a general bound in Theorem~\ref{thm:bv_ogd_bound} and then give a sublinear, finite-time regret bound in Corollary~\ref{cor:finite_time}.

\begin{theorem}[Regret bound for \texttt{bOGD}]
Let $\eta = \frac{a}{\sqrt{\tau}}$, $a > 0$. The expected regret of \texttt{bOGD} is upper bounded by
\[
\begin{aligned}
\E\left[\Reg_\tau \right] &\leq \left(\frac{n}{2a}  + \frac{a\left(L_2\right)^2}{2}\right)\sqrt{\tau} + 2n \sqrt{\tau} V_\tau + \frac{L_2\sqrt{n}\tau}{2},
\end{aligned}
\]
for any time horizon $\tau$.
\label{thm:bv_ogd_bound}
\end{theorem}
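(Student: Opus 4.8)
The plan is to combine the two lemmas with a dynamic-regret bound for the relaxed update~\eqref{eq:update_1}. By Lemma~\ref{lem:regret_is_sum}, it suffices to bound $\Reg_\tau(\text{relaxed})$ and the cumulative rounding error separately. Lemma~\ref{lem:err_rand_2} immediately handles the rounding term: summing over $t=1,\ldots,\tau$ gives $\sum_{t=1}^\tau \E[|f_t(\hxt)-f_t(\xt)|] \leq \frac{L_2\sqrt{n}\,\tau}{2}$, which is exactly the last term in the claimed bound. So the whole argument reduces to showing
\[
\Reg_\tau(\text{relaxed}) \leq \left(\frac{n}{2a} + \frac{a(L_2)^2}{2}\right)\sqrt{\tau} + 2n\sqrt{\tau}\,V_\tau .
\]

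First I would recall that~\eqref{eq:update_1} is exactly a dynamic mirror descent step with the Euclidean Bregman divergence $\BD(\xx,\yy)=\frac12\|\xx-\yy\|_2^2$ and an $\ell_1$-regularizer, so the standard one-step analysis applies. The core inequality I would invoke (or rederive via the optimality condition of the $\argmin$ and the three-point identity for Bregman divergences) is the per-round bound
\[
\eta\bigl(f_t(\xt)-f_t(\xst)\bigr) \leq \BD(\xst,\xt) - \BD(\xst,\xx_{t+1}) + \frac{\eta^2 L_2^2}{2},
\]
using convexity of $f_t$, $\|\nabla f_t(\xt)\|_2\leq L_2$, and the $1$-strong convexity of $\frac12\|\cdot\|_2^2$; if $\lambda>0$ the regularizer terms can be dropped after noting they only help (or carried along and cancelled). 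Dividing by $\eta$ and summing telescopically gives
\[
\Reg_\tau(\text{relaxed}) \leq \frac{1}{\eta}\sum_{t=1}^\tau\bigl(\BD(\xst,\xt)-\BD(\xst,\xx_{t+1})\bigr) + \frac{\eta L_2^2\tau}{2}.
\]
The telescoping is not clean because the comparator $\xst$ changes each round; the standard fix is to write the sum as $\BD(\mathbf{x}_1^\star,\xx_1) + \sum_{t=2}^\tau \bigl(\BD(\xst,\xt)-\BD(\mathbf{x}_{t-1}^\star,\xt)\bigr)$ after reindexing, and then bound each difference using $\BD(\mathbf{a},\mathbf{c})-\BD(\mathbf{b},\mathbf{c}) \leq \|\mathbf{a}-\mathbf{b}\|_2\cdot(\text{diameter-type term})$. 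On $[0,1]^n$ the Euclidean diameter is $\sqrt{n}$, and $\BD(\xst,\xx_1)\leq \frac{n}{2}$, so one gets something of the form $\frac{1}{\eta}\bigl(\frac{n}{2} + C\sqrt{n}\,V_\tau\bigr)$; getting the constant to be exactly $2n$ on the $V_\tau$ term requires bounding $\BD(\xst,\xt)-\BD(\mathbf{x}_{t-1}^\star,\xt)$ by expanding the squares as $\langle \xst-\mathbf{x}_{t-1}^\star,\, \xst+\mathbf{x}_{t-1}^\star-2\xt\rangle/2 \leq \|\xst-\mathbf{x}_{t-1}^\star\|_2\cdot\frac{1}{2}\|\xst+\mathbf{x}_{t-1}^\star-2\xt\|_2$ and bounding the second factor crudely by $2\sqrt n$ (each coordinate of $\xst+\mathbf{x}_{t-1}^\star-2\xt$ lies in $[-2,2]$), giving the factor $2\sqrt n \cdot \sqrt n = 2n$ per unit of variation. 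Substituting $\eta = a/\sqrt{\tau}$ yields $\frac{\sqrt\tau}{a}\bigl(\frac{n}{2}+2n V_\tau\bigr) \cdot$, i.e. $\frac{n}{2a}\sqrt\tau + \frac{2n}{a}\sqrt\tau\,V_\tau$, plus $\frac{aL_2^2}{2}\sqrt\tau$; absorbing/rescaling constants appropriately recovers the stated $\frac{n}{2a}\sqrt\tau + \frac{aL_2^2}{2}\sqrt\tau + 2n\sqrt\tau\,V_\tau$ once the three pieces are added to the rounding term.

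The main obstacle is the non-telescoping comparator: tracking the bookkeeping so that the shifting optima $\xst$ produce precisely a $V_\tau$-dependent term with the advertised constant $2n$ (rather than, say, $n$ or $4n$), and making sure the $\ell_1$-regularizer and the projection onto $[0,1]^n$ do not introduce extra terms. Everything else — convexity, the gradient bound, the choice $\eta=a/\sqrt\tau$, and invoking Lemmas~\ref{lem:regret_is_sum} and~\ref{lem:err_rand_2} — is routine. I would therefore structure the write-up as: (1) reduce via Lemma~\ref{lem:regret_is_sum}; (2) bound the rounding sum by Lemma~\ref{lem:err_rand_2}; (3) state and prove the per-round mirror-descent inequality; (4) sum and handle the shifting comparator to extract $V_\tau$; (5) plug in $\eta$ and collect terms.
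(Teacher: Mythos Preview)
Your proposal is structurally identical to the paper's proof: decompose via Lemma~\ref{lem:regret_is_sum}, bound the rounding sum via Lemma~\ref{lem:err_rand_2}, bound $\Reg_\tau(\text{relaxed})$ separately, and substitute $\eta=a/\sqrt{\tau}$. The only real difference is that the paper does not rederive the relaxed dynamic-regret bound from the one-step mirror-descent inequality; it simply invokes \cite[Theorem~2]{hall2015online} as a black box to obtain
\[
\Reg_\tau(\text{relaxed}) \leq \frac{D_{\max}}{\eta} + \frac{2L_\psi}{\eta}V_\tau + \frac{(L_2)^2\eta\tau}{2\sigma},
\]
and then plugs in $D_{\max}=n/2$, $L_\psi=n$, $\sigma=1$. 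Your steps (3)--(4) are effectively a reconstruction of the Hall--Willett proof, so what you buy is self-containment at the cost of length; what the paper buys is a two-line argument.

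Two remarks on your bookkeeping. First, your Cauchy--Schwarz argument for the shifting comparator actually yields $\sqrt{n}\,V_\tau/\eta$, not $2n\,V_\tau/\eta$: the factor $\tfrac12$ in front of $\|\xst+\mathbf{x}_{t-1}^\star-2\xt\|_2$ cancels one of the $2$'s, leaving $\sqrt{n}$ per unit of variation. This is a \emph{tighter} constant than the paper's $2n$ (which comes from taking $L_\psi=n$ in the cited theorem), so the stated bound still holds. Second, after substituting $\eta=a/\sqrt{\tau}$ you correctly obtain $\frac{2n}{a}\sqrt{\tau}\,V_\tau$, not $2n\sqrt{\tau}\,V_\tau$; the paper's own substitution in Appendix~\ref{app:proof_bound} produces the same $1/a$, which is silently dropped in the theorem statement. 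Your ``absorbing/rescaling constants'' cannot remove the $1/a$, so do not try to match the printed constant exactly --- just carry the $1/a$ through.
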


The proof of Theorem~\ref{thm:bv_ogd_bound} is provided in Appendix~\ref{app:proof_bound}. The regret bound of \texttt{bOGD} is based on Lemmas~\ref{lem:regret_is_sum} and~\ref{lem:err_rand_2}. Using Theorem~\ref{thm:bv_ogd_bound}, we can bound the expected regret with the sum of the rounding error and the error due to tracking relaxed optima. The tracking error is equivalent to the error made by the dynamic mirror descent~\cite{hall2015online}. The relaxed optimum tracking regret is sublinear if $V_\tau < O(\sqrt{\tau})$. Lemma~\ref{lem:err_rand_2} quantifies the rounding error in each round. It grows linearly in $\tau$ because rounding error will inevitably occur in almost all rounds.

Using Lemmas~\ref{lem:regret_is_sum} and~\ref{lem:err_rand_2}, we can provide a sublinear regret bound for \texttt{bOGD} when it is used over the constrained time horizon $T \leq \tau$. The constrained time horizon $T$ is based on the Lipschitz continuity modulus with respect to the $1$-norm and the step size scaling factor, $a$. It can therefore be computed before implementation.
\begin{lemma}
Suppose $T \leq \left(aL_1\right)^{2}$ and $a > 0$, and set $\eta = \frac{a}{\sqrt{T}}$. The expected regret of \texttt{bOGD} is upper bounded by
\[
\begin{aligned}
\E\left[\Reg_T \right] &\leq \left(\frac{n}{2a}  + \frac{a\left(L_2\right)^2}{2} + \frac{a L_1 L_2 \sqrt{n}}{2}\right)\sqrt{T} + 2nL_1 V_T.
\end{aligned}
\]
\label{lem:bv_ogd_bound}
\end{lemma}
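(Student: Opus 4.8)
The plan is to assemble the bound from three ingredients that are already in place — the tracking/rounding decomposition of Lemma~\ref{lem:regret_is_sum}, the per-round rounding estimate of Lemma~\ref{lem:err_rand_2}, and the dynamic mirror descent regret analysis underlying Theorem~\ref{thm:bv_ogd_bound} — and then to use the horizon restriction $T \leq (aL_1)^2$ to trade the terms that grow linearly in $T$ for terms of order $\sqrt{T}$. First I would apply Lemma~\ref{lem:regret_is_sum} over the horizon $T$ to write $\E\left[\Reg_T\right] \leq \Reg_T(\text{relaxed}) + \sum_{t=1}^T \E\left[\left|f_t(\hxt) - f_t(\xt)\right|\right]$, and then bound the two summands separately.

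For the relaxed term, observe that \eqref{eq:update_1} is dynamic mirror descent against the moving comparator $\xst$ with the Euclidean Bregman divergence (and the optional $\ell_1$ regularizer). Reusing the telescoping argument from the proof of Theorem~\ref{thm:bv_ogd_bound} — bounding the initial and comparator-drift Bregman terms via the fact that $[0,1]^n$ has squared $\ell_2$-diameter $n$, bounding the gradient contributions with $\left\|\nabla f_t\right\|_2 \leq L_2$, and handling (or simply discarding) the $\ell_1$ regularizer so that $\lambda$ does not appear — gives a bound of the shape
\[
\Reg_T(\text{relaxed}) \leq \frac{n}{2\eta} + \frac{\eta L_2^2 T}{2} + \frac{2n}{\eta}\, V_T,
\]
which, after substituting $\eta = a/\sqrt{T}$, becomes $\left(\frac{n}{2a} + \frac{a L_2^2}{2}\right)\sqrt{T} + \frac{2n\sqrt{T}}{a}\, V_T$. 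For the rounding term, Lemma~\ref{lem:err_rand_2} yields $\E\left[\left|f_t(\hxt) - f_t(\xt)\right|\right] \leq \frac{L_2\sqrt{n}}{2}$ for each $t$, so $\sum_{t=1}^T \E\left[\left|f_t(\hxt) - f_t(\xt)\right|\right] \leq \frac{L_2\sqrt{n}}{2}\,T$.

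The final step is to invoke the hypothesis: $T \leq (aL_1)^2$ is equivalent to $\sqrt{T} \leq aL_1$, and hence
\[
\frac{L_2\sqrt{n}}{2}\,T = \frac{L_2\sqrt{n}}{2}\sqrt{T}\cdot\sqrt{T} \leq \frac{aL_1 L_2\sqrt{n}}{2}\sqrt{T}, \qquad \frac{2n\sqrt{T}}{a}\,V_T \leq \frac{2n\,aL_1}{a}\,V_T = 2nL_1\,V_T.
\]
Summing the three contributions produces exactly the stated inequality. The only substantive work is the relaxed regret bound, i.e., carrying the moving-comparator path-length terms through the mirror-descent telescoping with the extra $\ell_1$ term present; but that is the same computation as in the proof of Theorem~\ref{thm:bv_ogd_bound}, so within Lemma~\ref{lem:bv_ogd_bound} itself the real content is the bookkeeping of this last step — recognizing that $T \leq (aL_1)^2$ is precisely the condition that demotes the linear rounding term to order $\sqrt{T}$ while simultaneously replacing the $\sqrt{T}$ weight on $V_T$ by the constant $L_1$, thereby converting the general bound of Theorem~\ref{thm:bv_ogd_bound} into a sublinear one over the restricted horizon.
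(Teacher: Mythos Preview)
Your proposal is correct and is essentially the same argument as the paper's: both start from the intermediate inequality $\E[\Reg_T] \leq \frac{n}{2\eta} + \frac{2n}{\eta}V_T + \frac{(L_2)^2\eta T}{2} + \frac{L_2\sqrt{n}T}{2}$ obtained in the proof of Theorem~\ref{thm:bv_ogd_bound}, and then use the horizon restriction (the paper phrases it as $\eta L_1 \geq 1$, you as $\sqrt{T} \leq aL_1$, which are equivalent once $\eta = a/\sqrt{T}$) to demote the $T$-linear rounding term to order $\sqrt{T}$ and to strip the $\sqrt{T}/a$ coefficient from $V_T$. The only difference is the order in which $\eta$ is substituted, which is cosmetic.
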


The proof of Lemma~\ref{lem:bv_ogd_bound} is presented in Appendix~\ref{app:proof_bv_ogd_bound}. Lemma~\ref{lem:bv_ogd_bound} means that if $V_T$ is sublinear, the expected worst-case performance bound of the decision maker will improve over time for $t\leq T$. We use Lemma~\ref{lem:bv_ogd_bound} to obtain the following regret bound for any finite time horizon, $\tau < \infty$. This is similar to the regret analysis of~\cite{shahrampour2017distributed}.
\begin{corollary}[Finite-time regret bound for \texttt{bOGD}]
Let $T \leq \left(aL_1\right)^{2}$ and $a > 0$. Using \texttt{bOGD} with $\eta = \frac{a}{\sqrt{T}}$, the expected cumulative regret over the finite time horizon, $\tau  < \infty,$ is bounded from above by:
\[
\E\left[\Reg_\tau \right] \leq \left(\frac{n}{2a} + \frac{3aL^2_2}{2} + \frac{a L_1 L_2 \sqrt{n}}{2} \right) \tau^\epsilon + 2nL_1 V_\tau,
\]
where $0 < \epsilon < 1$. Consequently, $\E\left[ \Reg_\tau \right]$ is sublinear if $V_\tau < O (\tau)$.
\label{cor:finite_time}
\end{corollary}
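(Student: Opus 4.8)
The plan is a restarting (epoch) argument built on Lemma~\ref{lem:bv_ogd_bound}, in the spirit of the finite-time analysis of~\cite{shahrampour2017distributed}. The key observation is that, precisely because of the hypothesis $T \le (aL_1)^2$, the bound in Lemma~\ref{lem:bv_ogd_bound} contains \emph{no} term that grows linearly in the block length $T$ (the rounding and variation pieces have been converted into $\sqrt T$-order and constant-order pieces via $\sqrt T \le aL_1$). Hence it suffices to cover the horizon $\tau$ by enough blocks of length $T$ and then tune $T$ — equivalently, through the constraint $T \le (aL_1)^2$, the step-size scale $a$ — as a function of $\tau$ and $\epsilon$.

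Concretely, I would partition $\{1,\dots,\tau\}$ into $m = \lceil \tau/T\rceil$ consecutive blocks $B_1,\dots,B_m$ of length at most $T$ and restart \texttt{bOGD} at the first round of each block with a fresh iterate in $[0,1]^n$ and the fixed step size $\eta = a/\sqrt T$. Since $T \le (aL_1)^2$, Lemma~\ref{lem:bv_ogd_bound} applies inside every block (the last, possibly shorter, block being controlled by the same expression because $\eta$ is held constant), so the expected regret accrued on $B_j$ is at most $\big(\frac{n}{2a} + \frac{aL_2^2}{2} + \frac{aL_1L_2\sqrt n}{2}\big)\sqrt T + 2nL_1 V_{B_j}$, where $V_{B_j}$ is the variation of the relaxed optima restricted to block $j$. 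Summing over $j$: the block-wise variations omit only the $m-1$ inter-block transitions, so $\sum_{j=1}^m V_{B_j} \le V_\tau$ and the variation contributions collapse to $2nL_1 V_\tau$, while the $\sqrt T$-term is multiplied by $m \le \tau/T + 1$, giving $\big(\frac{n}{2a} + \frac{aL_2^2}{2} + \frac{aL_1L_2\sqrt n}{2}\big)\big(\tfrac{\tau}{\sqrt T} + \sqrt T\big)$. Choosing $T$ as an integer with $T \le \tau$ and $T \le (aL_1)^2$ (the latter secured by taking $a$ large enough) of order $\tau^{2(1-\epsilon)}$ makes $\tau/\sqrt T$ of order $\tau^\epsilon$ and $\sqrt T \le \tau^\epsilon$, so both pieces are $O(\tau^\epsilon)$; substituting and absorbing the leftover $\sqrt T$-contribution from the ceiling into the $L_2^2$-coefficient (this is what turns $\tfrac{aL_2^2}{2}$ into $\tfrac{3aL_2^2}{2}$) yields $\E[\Reg_\tau] \le \big(\frac{n}{2a} + \frac{3aL_2^2}{2} + \frac{aL_1L_2\sqrt n}{2}\big)\tau^\epsilon + 2nL_1 V_\tau$. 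The concluding sentence is then immediate, since $\epsilon < 1$ forces the first term to be $o(\tau)$.

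I expect the main obstacle to be the parameter coupling together with the attendant constant bookkeeping: $T$ must be chosen simultaneously at most $\tau$, at most $(aL_1)^2$, and large enough that $\tau/\sqrt T = O(\tau^\epsilon)$ while $\sqrt T$ stays $O(\tau^\epsilon)$, and one must track exactly how the ceiling term $\lceil \tau/T\rceil \le \tau/T + 1$ is folded into the stated coefficients (and over which range of $\epsilon$ the balance is feasible). Everything else — the partition, the per-block invocation of Lemma~\ref{lem:bv_ogd_bound}, and the telescoping inequality $\sum_j V_{B_j} \le V_\tau$ — is routine.
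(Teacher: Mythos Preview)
Your restarting strategy is exactly the paper's approach: partition $\{1,\dots,\tau\}$ into blocks of length $T$, apply Lemma~\ref{lem:bv_ogd_bound} on each block, and sum, using $\sum_j V_{B_j} \le V_\tau$. The one substantive difference is how $\epsilon$ arises. You treat $\epsilon$ as a \emph{target} and then choose $T \approx \tau^{2(1-\epsilon)}$, enlarging $a$ if necessary so that $(aL_1)^2 \ge T$. The paper does the opposite: it keeps $T$ and $a$ fixed as in the hypothesis, writes the summed bound as $c_1 N\sqrt{T} + 2nL_1 V_\tau$ with $N = \tau/T$ (assumed integer, so no ceiling bookkeeping), and then simply \emph{defines} $\epsilon = \ln(N\sqrt{T})/\ln\tau \in (0,1)$ so that $N\sqrt{T} = \tau^\epsilon$ identically.

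Your route has a real snag you should not wave away: if you let $a$ grow with $\tau$ to enforce $T \le (aL_1)^2$, the coefficients $\tfrac{3aL_2^2}{2}$ and $\tfrac{aL_1L_2\sqrt n}{2}$ are no longer constants, and e.g.\ $a \gtrsim \tau^{1-\epsilon}/L_1$ makes $\tfrac{3aL_2^2}{2}\tau^\epsilon$ linear in $\tau$. The paper avoids this precisely because $a$ stays fixed and $\epsilon$ is a derived quantity (which, incidentally, always lands in $[1/2,1)$ when $T\le\tau$). As a minor point, your proposed mechanism for the $3$ in $\tfrac{3aL_2^2}{2}$ via the ceiling term does not balance: the extra $\sqrt T$ multiplies all of $c_1$, not just the $L_2^2$ piece. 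In fact the paper's own proof produces the tighter constant $c_1 = \tfrac{n}{2a} + \tfrac{aL_2^2}{2} + \tfrac{aL_1L_2\sqrt n}{2}$, so the $3$ in the corollary statement is simply slack, not something the argument needs to manufacture.
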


\begin{proof}
Suppose that the algorithm is restarted after $T \leq \left(aL\right)^{2}$ rounds. Let $m \in \mathbb{N}$ be the number of times the algorithm has been restarted. Let $N = \tau/T$ be the number of times the algorithm is required to be restarted throughout the time horizon $\tau$. We assume that $N \in \mathbb{N}$, rounding up $\tau/T$ if it is not integer.

Let $\Reg_{t_0:t_1} = \sum_{t=t_0}^{t_1} f_t(\hxt) - f_t(\mathbf{b}_t^\star)$ be the cumulative regret from round $t_0$ to $t_1$.
By Lemma~\ref{lem:bv_ogd_bound}, we have,
\[
\begin{aligned}
\E\left[\Reg_{{(m-1)T+1:mT}} \right] &\leq c_1 \sqrt{{mT - (m-1)T}} \\
&\quad+ c_2 V_{{(m-1)T+1:mT}},
\end{aligned}
\]
where $c_1 = \frac{n}{2a}  + \frac{a\left(L_2\right)^2}{2} + \frac{a L_1 L_2 \sqrt{n}}{2}$, $c_2=2nL_1$, and $V_{t_0:t_1} = \sum_{t=t_0}^{{t_1-1}} \left\| \xx_{t+1}^\star - \xx_{t}^\star \right\|$. Then, the cumulative regret is 
\begin{align}
\E\left[\Reg_\tau\right] &= \sum_{m=1}^N \E\left[\Reg_{{(m-1)T+1:mT}} \right] \nonumber\\
&\leq \sum_{m=1}^N c_1 T^{\frac{1}{2}} + c_2 \sum_{m=1}^N V_{{(m-1)T+1:mT}}\nonumber\\
&= c_1 N T^{\frac{1}{2}}  + c_2 V_\tau. \label{eq:reg_tau}
\end{align}
By definition, $\tau = N T$, thus there exists $0 < \epsilon < 1$ {given by $\epsilon = \frac{\ln\left(N \sqrt{T} \right)}{\ln\tau}$} such that $\tau^\epsilon =  N T^\frac{1}{2}$. We rewrite~\eqref{eq:reg_tau} as
\[
\E\left[\Reg_\tau \right] \leq c_1  \tau^\epsilon + c_2 V_\tau,
\]
which completes the proof.
\end{proof}

Corollary~\ref{cor:finite_time} shows that for any finite time horizon and $V_\tau < O\left( \tau \right)$, the regret is sublinear and the time-averaged regret ($\E\left[R_\tau\right]/\tau$) decreases as the time horizon increases. Due to our finite time analysis, we can accommodate $V_\tau$ that is almost linear in $T$. We expect that similar results would hold for standard OCO algorithms~\cite{zinkevich2003online,hall2015online} if they were to be analyzed similarly.

Corollary~\ref{cor:finite_time}'s bound is looser than prevalent dynamic OCO algorithms~\cite{zinkevich2003online,hall2015online,mokhtari2016online} because of the $\tau^{\epsilon}$ term. If, for example, $\tau = 10^4$ and $T=100$, the regret is bounded by $O\left( \tau^{3/4} \right) + O \left(V_\tau \right)$. This increased regret bound order is explained in part by the randomization step used to deal with the binary constraints. The randomization adds an extra term to the regret of standard continuous-variable online gradient descent-based algorithms, as shown in~Lemma~\ref{lem:regret_is_sum}. This term increases the order of the regret bound when moving from a constrained time horizon to any finite time horizon.

\section{Application to demand response}
\label{sec:application}

We use \texttt{bOGD} for setpoint tracking with thermostatically controlled loads. Our formulation includes three types of constraints on the TCLs: (i) temperature preference constraints, (ii) manual override by the load user and (iii) lockout. The lockout imposes a $M$-minute downtime period when an air conditioner is off regardless of other constraints. Lockout is standard on most commercial air conditioning units and prevents their compressor from being turned off and on repeatedly~\cite{zhang2013aggregated}. Otherwise, operating the unit in such fashion can degrade the unit’s efficiency and life span due to compressor failure~\cite{zhang2013aggregated,vrettos2016fast}, and for these reasons, it is avoided. These constraints are modeled with round-dependent parameters of the loss function which we introduce next. Their only impact to the regret bound of \texttt{bOGD} is with respect to the Lipschitz constants $L_1$ and $L_2$ of the loss function, and cumulative variation $V_T$ of the OCO problem. We present next the setpoint tracking setup and then evaluate its performance in numerical simulations. 

\subsection{Setup}
\label{sec:background} 

We consider $n$ TCLs. The load parameters are:
\begin{itemize}
	\item $\mathbf{r} \in \RR_+^n$, where $\mathbf{r}(i)$ is the thermal resistance of load $i$;
	\item $\mathbf{c} \in \RR_+^n$, where $\mathbf{c}(i)$ is the thermal capacitance of load $i$;
	\item $\bm{\theta}_\text{d} \in \RR^n$, where $\bm{\theta}_\text{d}(i)$ is load $i$'s  desired temperature;
	\item $\overline{\bm{\theta}}, \underline{\bm{\theta}} \in \RR^n$, where $\overline{\bm{\theta}}(i)$ and $\underline{\bm{\theta}}(i)$ are respectively the maximum and minimum temperature of load $i$'s temperature deadband.
\end{itemize}
The load parameters are assumed to be known. The following online parameters are observed at the end of each round:
\begin{itemize}
	\item $s_t \in \RR$, the power setpoint to track;
	\item $\tpt \in \RR_+^n$, where $\tpt(i)$ is the power consumption at time $t$ of load $i$'s cooling unit when it is not available for demand response because it is constrained by its temperature, lockout, or manual override.
	\item $\pt \in \RR_+^n$, where $\pt(i)$ represents the power consumption of load $i$ and time $t$ when it is controllable, i.e., when it is not constrained by the deadband, lockout, or manual override.
	\item $\tt \in \RR^n$, where $\tt(i)$ is the indoor temperature at time $t$ of load $i$. Let $\left\langle \bm{\theta} \right\rangle_{t} \in \RR^n$ be the vector of temperatures averaged over rounds $1$ to $t$;
	\item $\theta_t^\text{ambient} \in \RR$ is the ambient (outdoor) temperature. The ambient temperature is the same for all loads. This assumption simplifies the computation but can be relaxed;
	\item $\uu_t \in \{0,1\}^n$ is the cooling unit override at time $t$ of load $i$. When $\uu_t(i)=1$, the local cooling unit overrides the demand response control because the temperature is above the deadband. It also represents rounds when the user manually controls the cooling unit, which we model as uncertain.
\end{itemize}
We note that the online parameters represent observations or measurements. 

Lastly, let $\hxt(i)=1$ and $\hxt(i)=0$ be, respectively, the signal to turn on and off load $i$'s cooling unit.

\setlength{\textfloatsep}{2pt}
\begin{algorithm}[!t]
\begin{algorithmic}[1]
\STATEx \textbf{Parameters:} $T$, $a$, $\rho$, $\lambda$, $h$
\STATEx \textbf{Initialization:} Set $\xx_1 \in \left[0,1 \right]^n$, $\eta = \frac{a}{\sqrt{T}}$, and $K = \frac{5}{60h}$
\medskip

\FOR{$t = 1,2, \ldots, T$}
\STATE Implement binary decisions $\hxt = \mathcal{R}\left( \xt \right)$.
\STATE Observe all online parameters: $\pt$, $\tpt$ $\uu_t$, $\tt$, $s_t$, $\theta_t^\text{ambient}$.
\STATE Compute the mean temperature $\left\langle \bm{\theta} \right\rangle_{t-1}$.

\STATE Update the relaxed decision variable $\xx_{t+1}$:
\begin{align*}
\xx_{t+1} = \argmin_{\xx \in \left[0,1 \right]^n} \eta \nabla f_{t}(\xx_{t})^\top \xx + \frac{1}{2}\left\| \xx - \xx_{t} \right\|^2_2 + \eta \lambda \left\|\xx\right\|_1.
\end{align*}
\ENDFOR
\end{algorithmic}
\caption{\texttt{bOGD} for setpoint tracking with TCLs}
\label{alg:controller}
\end{algorithm}

The model and update rule are given in Algorithm~\ref{alg:controller}. We use~\eqref{eq:loss_function} from~\cite{lesage2018setpoint} as the loss function. In~\eqref{eq:loss_function}, $\rho>0$ and $\lambda>0$ are numerical parameters. There are three terms, as explained below.

\begin{figure*}[!t]
\begin{equation}
\begin{aligned}
f_t \left( \hxt \right) &= \left( s_t - \pt^\top \hxt - \mathbf{\tilde p}_t^\top \uu_t\right)^2 + \lambda \left\| \hxt \right\|_1 \\
&\qquad +\frac{\rho}{2} \left\|\frac{t-1}{t}\mtt + \frac{1}{t}\left(\vphantom{\frac{1}{t}}\mathbf{B} \bm{\theta}_{t-1} + (\mathbf{I} - \mathbf{B})\left(\oo\theta_t^\text{ambient}  - \hxt \diag \left(\mathbf{r}\right) \diag \left(\pt\right) \right) \right) - \bm{\theta}_\text{d}\right\|_2^2
 \label{eq:loss_function}
\end{aligned}
\end{equation}
\hrulefill
\end{figure*}
\begin{enumerate}
	\item \emph{Setpoint tracking losses.} This term is used to match the total power consumption of the loads to the regulation signal, $s_t$. The power consumption of the TCLs is split into two components: the controllable part dispatched by the algorithm, $\pt^\top \hxt$, and the uncontrollable part, $\mathbf{\tilde p}_t^\top \uu_t$, set by the loads to meet their constraints. The squared tracking error penalizes large deviations.
	\item \emph{Sparsity regularizer.} This $1$-norm term is used to avoid sending nonzero control signals to loads that are not available at a given round, e.g., if their temperature is out of the deadband or during lockout. The availability of a load $i$ is expressed by $\pt(i) \neq 0$. It is also used to minimize the number of loads dispatched and to avoid sending small signals to the loads. The sparsity regularizer is directly incorporated to the update instead of being taken into account via the gradient. This is done to improve the regularization performance~\cite{duchi2010composite}.
	\item \emph{Mean temperature regularizer.} This $2$-norm term minimizes the long-term impact of demand response on the loads by promoting a small difference between the average and desired temperatures. While the temperatures used in Algorithm~\ref{alg:controller} are based on measurements, we use the following discrete-time approximation to model the impact of turning on the cooling units on the temperature~\cite{callaway2009tapping,mathieu2015arbitraging}:
	\begin{equation}
	\bm{\theta}_{t+1} = \mathbf{B} \bm{\theta}_{t} + (\mathbf{I} - \mathbf{B})\left(\oo\theta_t^\text{ambient}  - \hxt \diag \left(\mathbf{r}\right) \diag \left(\pt\right) \right), \label{eq:temp}
	\end{equation}
	where $\oo$ is the $n$-dimensional vector of ones, $\mathbf{B} = \diag \left(\exp\left( - \frac{h}{\mathbf{r}(i)\mathbf{c}(i)} \right), \text{ for } i =1,2,\ldots, n \right)$, and $h>0$ is the round duration.
\end{enumerate}

The temperature deadband, the lockout constraints, and the manual override are incorporated into the model via $\pt$. The loads measure their power consumption according to their availability and set $\pt$ locally, which is then observed by the algorithm. A load is available if its temperature is within the deadband, not in lockout, and not manually controlled by the user. The logical rules governing $\pt(i)$ are summarized below:
\begin{itemize}
	\item (\emph{deadband}) If $\bm{\theta}_{t-1}(i) < \underline{\bm{\theta}}(i)$, then the load sets $\pt(i) = 0$.
	\item (\emph{deadband}) If $\bm{\theta}_{t-1}(i) > \overline{\bm{\theta}}(i)$, then the load sets $\pt(i) = 0$. The cooling unit is activated using override control, i.e., $\uu_t(i) = 1$ and $\tpt(i) \neq 0$.
	\item (\emph{manual override}) If the load manually uses its cooling unit regardless of deadband constraints and demand response instructions, $\uu_t(i) = 1$, $\pt(i) = 0$, and $\tpt(i) \neq 0$.
	\item (\emph{lockout}) If $\mathbf{p}_{k-1}(i) \mathbf{\hat{x}}_{k-1}(i) + \tilde{\mathbf{p}}_{k-1}(i) \uu_{k-1}(i) > 0$ and $\mathbf{p}_{k}(i) \mathbf{\hat{x}}_{k}(i) + \tilde{\mathbf{p}}_{k}(i) \uu_{k}(i) = 0$ for any $k \in \left\{t-1,t-2,\ldots, t-K\right\}$ where $K$ is the number of rounds representing a duration of $M$ minutes, then $\pt(i) = 0$ and $\uu_t(i) = 0$. In other words, if the cooling unit was shut down in the past $M$ minutes, then it is unavailable for control.
	\item (\emph{available load}) Otherwise, load $i$ is available and $\pt(i)\neq 0$ and $\uu_t(i) = 0$.
\end{itemize}
Load $i$ only implements the demand response controls if it is available. Therefore, it only has its air conditioner turned on by the aggregator if $\pt(i) \neq 0$. We assume that the parsing is done locally by the load’s back-up controller. If $\pt(i) = 0$, the demand response control is ignored by the load, but it can be turned on by the override control, e.g., to keep the temperature within the deadband. Note that when in lockout, we have $\pt(i) = 0$ and $\mathbf{u}(i) = 0$, and the air conditioner is always off.

Lastly, a switching cost could be added to~\eqref{eq:loss_function} to penalize frequent changes in controls. By reducing these changes, the load should be less frequently in lockout, and the aggregator would benefit from increased load availability. This is a topic for future work.

\subsection{Numerical example}
\label{sec:example}

We consider an aggregation of $n=1000$ TCLs. We assume that the loads have access to two-way communication infrastructure. They can receive cooling instructions and can report their power consumption and temperature to and from the demand response aggregator. For example, the different sensors, thermostat and cooling unit's controller, can all be connected using different local communication protocols, e.g. Wi-Fi or ZigBee~\cite{gungor2012survey}, to a hub which is itself internet enabled. Information to and from the demand response aggregator can then be communicated via the Internet.

All load parameters are sampled uniformly from~\cite[Table I]{mathieu2015arbitraging}. We consider a round duration of $h=1$ minute and set the lockout time $M$ to be $5$ minutes. Let $s_t = s_{0} + w_t$ be the random and intermittent regulation signals where $w_t$ is constant for $5$ rounds and then sampled according to $w_t \sim \mathrm{N}(0,300)$.  We set $s_{0}=2400$ kW so that loads can balance maintaining their temperature within their deadband and tracking the setpoints. The initial decision $\xx_1(i)$ is set randomly to $0$ or $1$ with a probability of $0.5$ for all $i=1,2,\ldots,n$.

The ambient temperature for the short time horizon is $\theta_t^\text{ambient} = \theta_0^\text{ambient} + 0.25 \sin \left( t \pi / T\right)$ where $\theta_0^\text{ambient} = 34^\circ$C. We let the initial temperature of the TCLs be their desired ones, $\bm{\theta}_\text{d}$. The load temperature evolves as~\eqref{eq:temp} plus zero-mean Gaussian noise with variance $0.025$. The regularizer parameters are set to $\sigma = 500$ and $\lambda = 250$, and the step size parameter to $a=4 \times 10^{-4}$. For the numerical calculations, we use the parser \texttt{CVXPY}~\cite{cvxpy} with the solver Gurobi~\cite{gurobi}.

Figure~\ref{fig:dr} shows the power consumption of \texttt{bOGD}; of \texttt{bOGD} without randomization; and of the optimal dispatch, all of which include the uncontrollable part of the TCLs power consumption; the setpoint; and the uncontrollable power consumption presented individually. The uncontrollable power consumption, $\tpt^\top \uu_t$, represents all decisions taken locally by the loads to ensure that their temperatures stay within the deadband. The relative difference between the power consumption using randomized decisions and the relaxed, continuous decisions is, on average, $1.30\%$. The average relative tracking error of the randomized and relaxed decisions are $6.51\%$ and $6.46\%$, respectively. We note the higher tracking error at the start of the simulation. This is due to the limited number of temperature measurements used by the mean-temperature regularizer in the first rounds. We present the setpoint tracking root mean square error (RMSE) for \texttt{bOGD} and its counterpart without randomization (relaxed decisions) in Table~\ref{tab:rmse}. The second column of Table~\ref{tab:rmse} also shows the relative RMSE with respect to the average of the signal, $\langle s_t \rangle =2412.09$ kW. The RMSE corroborates the small difference in performance between \texttt{bOGD} with and without randomization (relaxed) when used in demand response. The method requires, on average, $90.2$ milliseconds on a $2.4$ GHz Intel Core i$5$ laptop computer to evaluate the demand response decisions for $n=1000$ loads. This illustrates the algorithm's high scalability and the potential for real-time implementation.

\begin{figure}[tb]
  \centering
  \includegraphics[width=1\columnwidth]{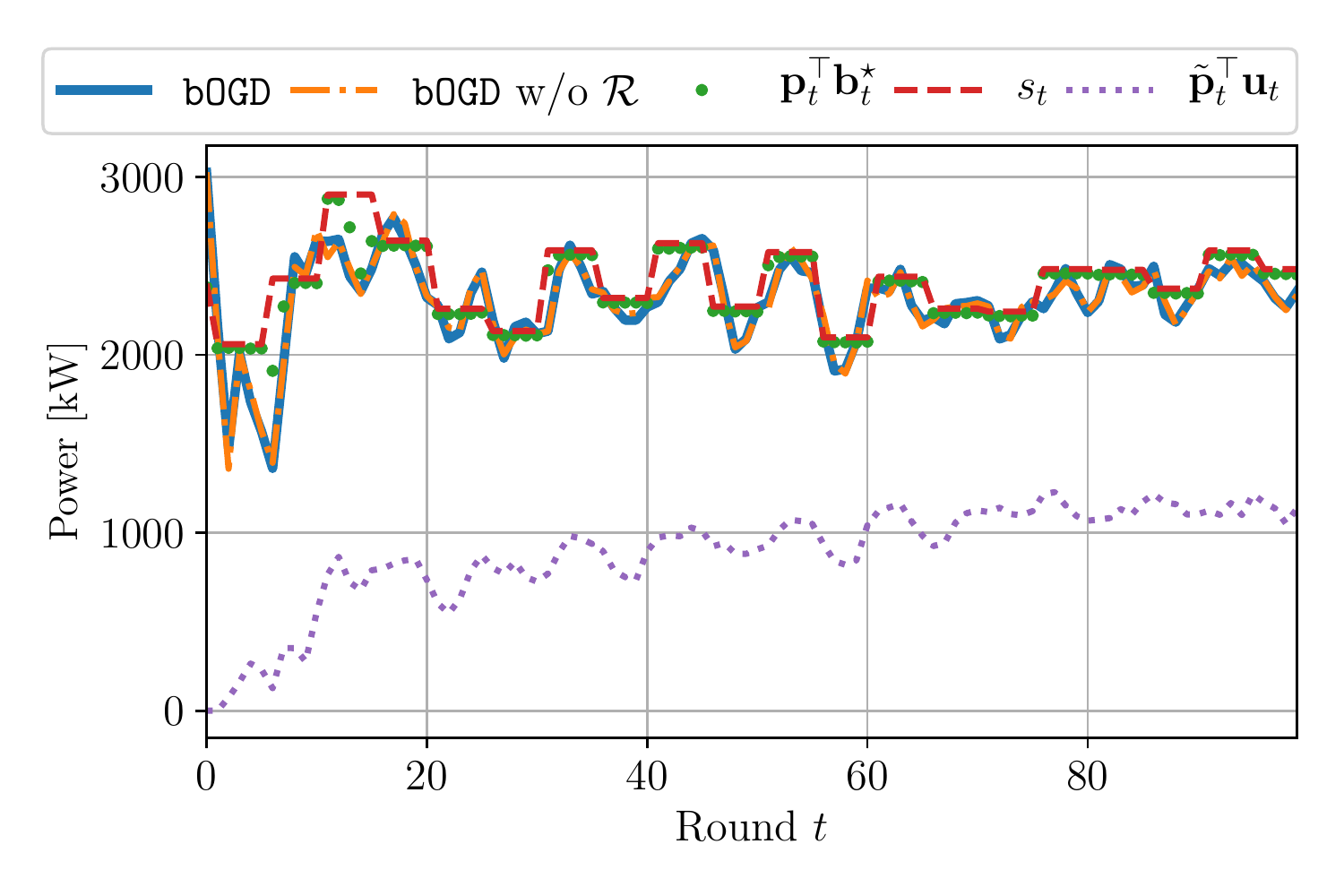}
  \vspace{-1cm}
  \caption{Demand response performance of \texttt{bOGD} with TCLs}
  \label{fig:dr}
\end{figure}

\begin{table}[tb]
  \caption{Setpoint tracking root mean square error for \texttt{bOGD}}
  \renewcommand{\arraystretch}{1.3}
  \centering

  \begin{tabular}{ccc}
  \hline

  \hline
  \textbf{Algorithm} & \textbf{RMSE [kW]}  & \textbf{relative RMSE} \\
  \hline
  \texttt{bOGD}   & $226.88$ & $9.41\%$\\
  \texttt{bOGD} w/o $\mathcal{R}$   & $229.17$ & $9.50\%$\\
  \hline

  \hline
  \end{tabular}
  \label{tab:rmse}
\end{table}

Figure~\ref{fig:temperature_tcls} shows two temperature profiles observed during the simulation. In Figure~\ref{fig:1000}, the TCL is dispatched on several occasions, including once over several rounds. Near the end of the simulation, the TCL is constrained to the upper limit of its deadband. Because of the randomization step, a load may become stuck at a high temperature, thus preventing it from providing sustained flexibility throughout the time horizon. Figure~\ref{fig:100} presents a load that, when lockout and deadband constraints permit, is frequently dispatched to follow the setpoint, sometimes providing an increase in power consumption continuously over several rounds.

\begin{figure*}[tb]
\centering
\subfloat[Load $i=361$]{\includegraphics[width=1\columnwidth]{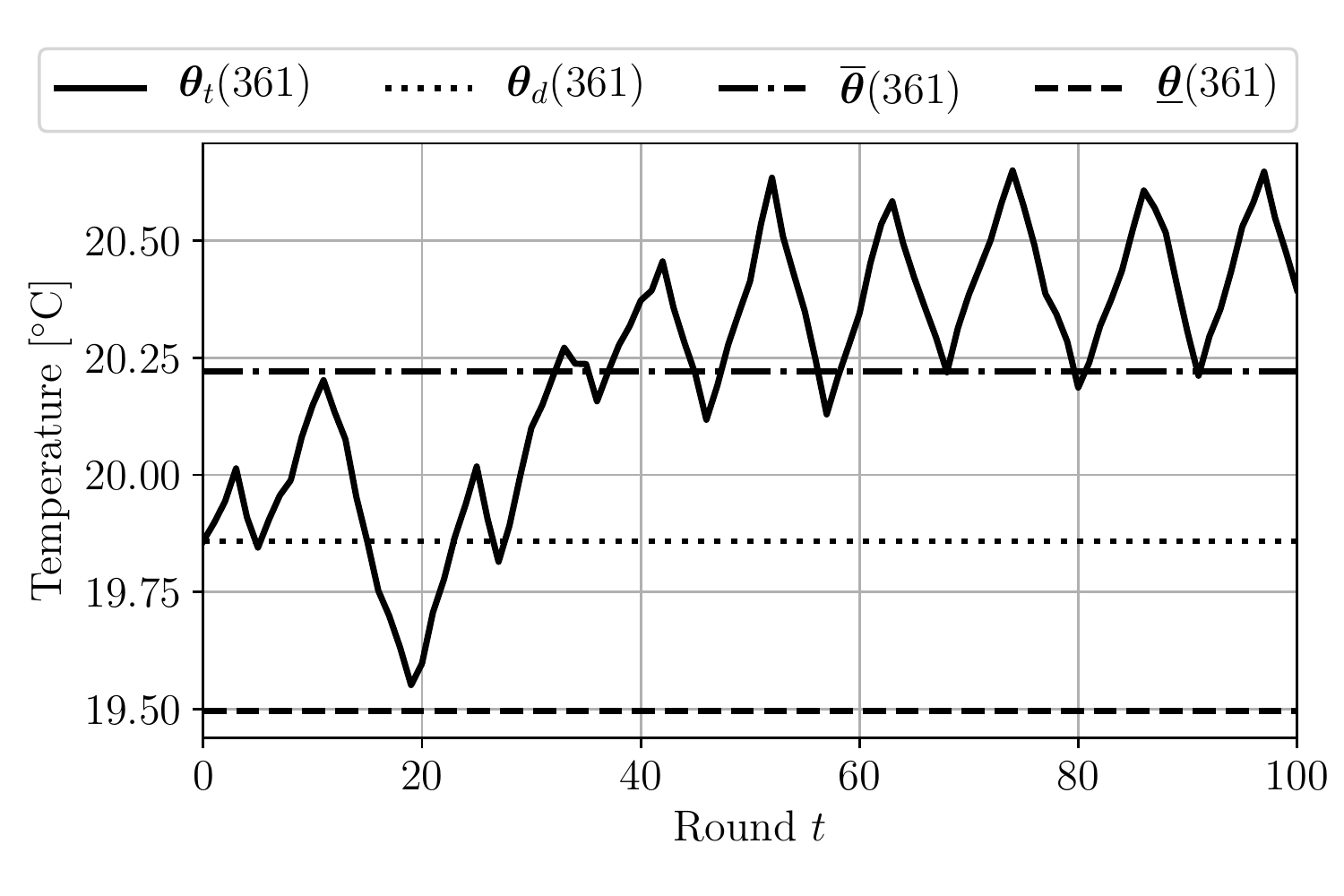}
\label{fig:1000}
}
\subfloat[Load $i=369$]{\includegraphics[width=1\columnwidth]{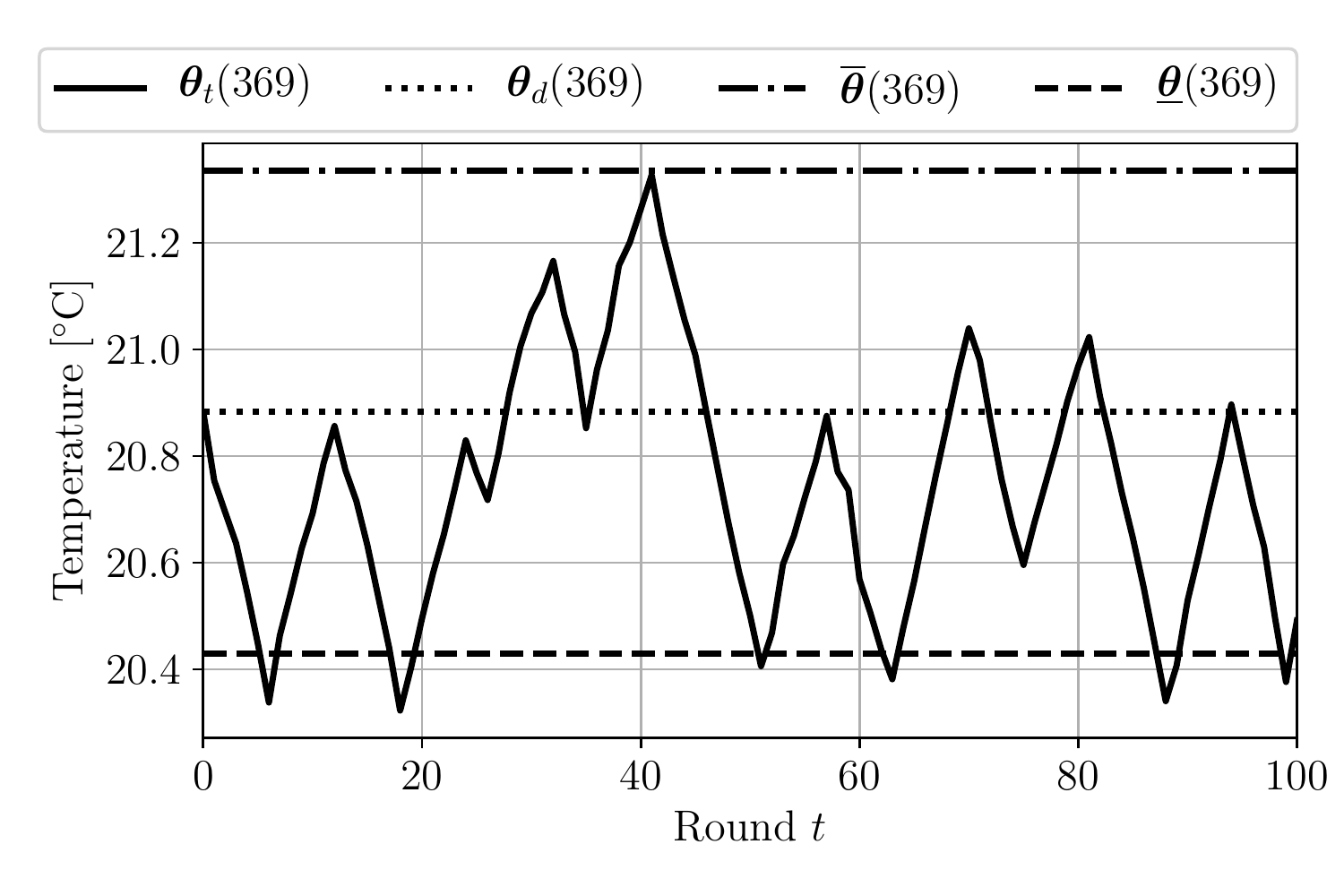}
\label{fig:100}
}
\vspace{-0.1cm}
\caption{Temperature profile of two selected TCLs during demand response}
\label{fig:temperature_tcls}
\end{figure*}

Figure~\ref{fig:regret} shows the regret for \texttt{bOGD} with and without randomization (relaxed decisions), averaged over $100$ randomization steps ($\langle$\texttt{bOGD}$\rangle_{100}$), and its finite-time regret bound. The averaged regret approximates the expected regret. Figure~\ref{fig:regret} shows that in the current setting the regret (i) outperforms the bound provided in Corollary~\ref{cor:finite_time}, (ii) is sublinear, thus showing that the decisions made with \texttt{bOGD} are approaching the round optimum at $t$ increases, and (iii) is similar to the regret without randomization and averaged over several simulations.

\begin{figure}[tb]
  \centering
  \includegraphics[width=1\columnwidth]{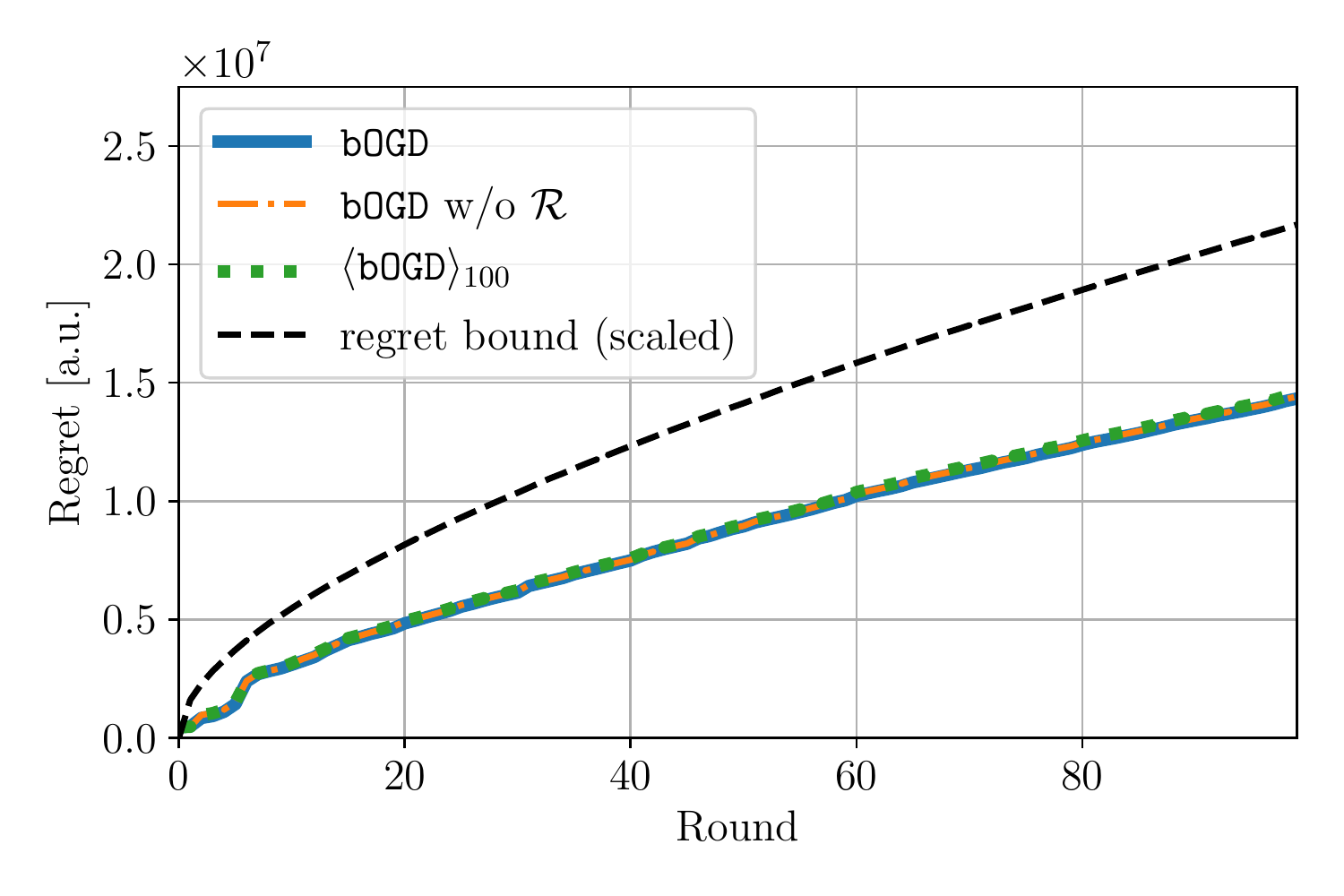}
  \vspace{-1cm}
  \caption{Regret comparison of \texttt{bOGD}}
  \label{fig:regret}
\end{figure}	

\section{Conclusion}
\label{sec:conclu}
In this paper, motivated by real-world demand response constraints, we incorporate binary constraints into OCO. We show that the expected dynamic regret of \texttt{bOGD}: (i) is upper bounded by the sum of the cumulative relaxed optimum tracking and the rounding error when the time horizon is unconstrained and (ii) has a sublinear in time and linear in cumulative variation in round optima bound when constrained to a finite time horizon. We use \texttt{bOGD} for real-time demand response with TCLs. We model the discrete on/off settings of the cooling units and unavailability due to lockout, temperature deadbands, and manual override.

Numerical simulations show that the randomization step yields a $1.30\%$ deviation, on average, from the relaxed decisions. The relative tracking error is $6.51\%$ and $6.46\%$, respectively, for the binary and relaxed algorithms. The algorithm is easy to implement and does not need extensive monitoring, data, or models. On average, less than $0.1$ second is required to compute decisions for $1000$ loads.

Future directions for this work include obtaining an asymptotic and tighter regret bound for \texttt{bOGD} and modeling general, time-invariant equality and inequality constraints, for example, using randomized rounding techniques.

\section*{Acknowledgment}
The authors thank Dr. J. Yuan for the helpful discussion.

\appendix

\subsection{Proof of Lemma~\ref{lem:regret_is_sum}}
\label{app:proof_of_sum}

The expected regret of \texttt{bOGD} is
\[
\E\left[\Reg_\tau \right] = \E\left[\sum_{t=1}^\tau f_t(\hxt) - f_t(\mathbf{b}_t^\star) \right],
\]
where we recall that $\mathbf{b}_t^\star \in \argmin_{\xx_t \in \left\{0,1\right\}^n} f_t(\xt)$ is a binary-integer optimum. We can bound from above the right-hand side by
\[
\E\left[\Reg_\tau \right] \leq \E\left[\sum_{t=1}^\tau f_t(\hxt) - f_t(\xt^\star) \right],
\]
because $\xst$ is a relaxed optimum and $f_t(\xst) \leq f_t(\mathbf{b}_t^\star)$. Re-arranging the terms, we obtain
\begin{align*}
\E\left[\Reg_\tau \right] &= \sum_{t=1}^T \E\left[ f_t(\hxt) + f_t(\xt) - f_t(\xt) - f_t(\xt^\star) \right]\\
&\leq \sum_{t=1}^\tau \E\left[\left|f_t(\hxt) - f_t(\xt) \right| \right] + f_t(\xt)- f_t(\xt^\star)\\
&= \Reg_\tau\left(\text{relaxed}\right) + \sum_{t=1}^\tau \E\left[\left|f_t(\hxt) - f_t(\xt) \right| \right],
\end{align*}
which completes the proof. $\qedproof$

\subsection{Proof of Lemma~\ref{lem:err_rand_2}}
\label{app:proof_lemma_rand_2}
The Lipschitz continuity of $f_t$ with respect to $\left\| \cdot \right\|_2$ yields
\[
\E\left[ \left|f_t\left( \hxt \right) - f_t\left( \xt \right) \right| \right] \leq \E \left[ L_2 \left\| \hxt - \xt \right\|_2\right].
\] 
We re-arrange the right-hand side to obtain
\begin{align}
\E\left[ \left|f_t\left( \hxt \right) - f_t\left( \xt \right) \right| \right] &\leq L_2 \E \left[ \sqrt{\sum_{i=1}^n\left( \hxt(i) - \xt(i) \right)^2}\right] \nonumber\\
&\leq L_2 \sqrt{  \sum_{i=1}^n \E \left[ \left( \hxt(i) - \xt(i) \right)^2 \right]}, \label{eq:here_exp}
\end{align}
where we have used Jensen's inequality for concave function to obtain the second inequality. By definition of the randomization function, we have $\E \left[ \hxt \right] = \xt$, and thus,~\eqref{eq:here_exp} can be re-expressed as:
\[
\E\left[ \left|f_t\left( \hxt \right) - f_t\left( \xt \right) \right| \right]\leq L_2 \sqrt{  \sum_{i=1}^n \E \left[ \left( \hxt(i) - \E \left[ \hxt (i)\right] \right)^2 \right]}.
\]
The expectation now represents the variance. Re-expressing the variance in terms of the first and second moment leads to
\[
\E\left[ \left|f_t\left( \hxt \right) - f_t\left( \xt \right) \right| \right]\leq L_2 \sqrt{  \sum_{i=1}^n \E \left[ \hxt(i)^2 \right] - \E \left[ \hxt(i) \right]^2  }.
\]
We have $\E \left[ \hxt(i)^2 \right] = \xt(i)$ and $\E \left[ \hxt(i) \right]^2 = \xt(i)^2$ which follows from the definition of $\mathcal{R}$. Hence, we obtain
\begin{align*}
\E\left[ \left|f_t\left( \hxt \right) - f_t\left( \xt \right) \right| \right] &\leq L_2 \sqrt{  \sum_{i=1}^n \xt(i) - \xt(i)^2  }\\
&\leq L_2 \sqrt{ \frac{n}{4}}
\end{align*}
and we have completed the proof. $\qedproof$

\subsection{Proof of Theorem~\ref{thm:bv_ogd_bound}}
\label{app:proof_bound}
We use Lemma~\ref{lem:regret_is_sum} and express the regret as:
\begin{equation}
\E\left[\Reg_\tau \right] \leq \Reg_\tau\left(\text{relaxed}\right) + \sum_{t=1}^\tau \E\left[\left|f_t(\hxt) - f_t(\xt) \right| \right] \label{eq:sum_of_bis}
\end{equation}
where $\Reg_\tau\left(\text{relaxed}\right)$ denotes the regret of the relaxed, continuous problem solved using dynamic mirror descent~\cite{hall2015online} with $\bm{\Phi}_t$ set to the identity matrix at all $t$, $\psi\left(\xx \right) = \frac{1}{2}\left\| \xx\right\|_2^2$, and $r\left( \xx\right) = \lambda \left\| \xx \right\|_1$. By~\cite[Theorem 2]{hall2015online}, we upper bound the $\Reg_\tau\left(\text{relaxed}\right)$ as
\begin{equation*}
\Reg_\tau\left(\text{relaxed}\right) \leq \frac{D_{\max}}{\eta} + \frac{2L_\psi}{\eta}V_\tau + \frac{\left(L_2\right)^2\eta \tau}{2 \sigma},
\end{equation*}
where $D_{\max} = \frac{n}{2} \geq \frac{1}{2}\left\| \xx - \yy \right\|_2^2$ (the Bregman divergence with respect to $\psi$) for all $\xx$, $\yy \in \XX$, $L_\psi = n$ is a Lipschitz constant for $\frac{1}{2}\left\| \xx - \yy \right\|_2^2$ and $\sigma = 1$, the strong-convexity modulus of $\psi$. We then have
\begin{equation}
\Reg_\tau\left(\text{relaxed}\right) \leq \frac{n}{2\eta} + \frac{2n}{\eta}V_\tau + \frac{\left(L_2\right)^2\eta \tau}{2}. \label{eq:regret_ogd_r1}
\end{equation}
We upper bound~\eqref{eq:sum_of_bis} using~\eqref{eq:regret_ogd_r1} and obtain
\begin{align*}
\E\left[\Reg_\tau \right] &\leq \frac{n}{2\eta} + \frac{2n}{\eta}V_\tau + \frac{\left(L_2\right)^2\eta \tau}{2} \\
&\qquad+ \sum_{t=1}^\tau \E\left[\left|f_t(\hxt) - f_t(\xt) \right| \right]
\end{align*}
Using Lemma~\ref{lem:err_rand_2} gives
\begin{align}
\E\left[\Reg_\tau \right] &\leq \frac{n}{2\eta} + \frac{2n}{\eta}V_\tau + \frac{\left(L_2\right)^2\eta \tau}{2} + \frac{L_2\sqrt{n}\tau}{2} \label{eq:starting_from}
\end{align}
Letting $\eta =\frac{a}{\sqrt{\tau}}$ completes the proof. $\qedproof$

\subsection{Proof of Lemma~\ref{lem:bv_ogd_bound}}
\label{app:proof_bv_ogd_bound}

We start this proof from~\eqref{eq:starting_from} of Appendix~\ref{app:proof_bound}. We have
\begin{align*}
\E\left[\Reg_T \right] &\leq \frac{n}{2\eta} + \frac{2n}{\eta}V_T + \frac{\left(L_2\right)^2\eta T}{2} + \frac{ L_2 \sqrt{n} T}{2}\\
&= \frac{n}{2\eta} + \frac{2nL_1}{\eta L_1}V_T + \frac{\left(L_2\right)^2\eta T}{2} + \frac{L_2 \sqrt{n}T}{2},
\end{align*}
where we have multiplied the second term by $L_1/L_1$. By assumption, $\eta L_1 \geq 1$ and hence
\[
\E\left[\Reg_T \right] \leq \frac{n}{2\eta} + 2nL_1 V_T + \frac{\left(L_2\right)^2\eta T}{2} + \frac{\eta L_1 L_2 T \sqrt{n}}{2}.
\]
Setting $\eta =\frac{a}{\sqrt{T}}$ completes the proof. $\qedproof$


\end{document}